\def\Hy@raisedlink#1{#1}  
\theoremstyle{plain}
\newtheorem{theorem}{Theorem}
\newtheorem{lemma}{Lemma}
\newtheorem{proposition}{Proposition}
\theoremstyle{definition}
\newtheorem{definition}{Definition}
\newtheorem{construction}{Construction}
\newtheorem{remark}{Remark}
\title{\Large\bfseries
M\"obius Transformations and the Analytic--Geometric Reconstruction of the Induction–Machine Circle Diagram
}
\let\oldeqref\eqref
\renewcommand{\eqref}[1]{Eq.~\oldeqref{#1}}
\author{
\large Anubhav Gupta\thanks{Email: \texttt{anubhav.gupta@inorbitaerospace.com}}\;
\href{https://orcid.org/0000-0002-3216-868X}{\includegraphics[height=10pt]{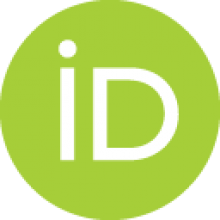}}\\[4pt]
\small Guidance, Navigation, and Control (GN\&C) Engineer\\
\small In Orbit Aerospace Inc., Torrance, CA 90501, USA\\
\small Visiting Researcher, Department of Aerospace Engineering Sciences\\
\small University of Colorado Boulder, CO 80303, USA
}
\date{}
\begin{document}
\maketitle
\vspace{-0.75em}

\begin{abstract}
The Heyland circle diagram is a classical graphical method for representing the steady--state behavior of induction machines using no--load and blocked--rotor test data. Despite its long pedagogical history, the traditional geometric construction has not been formalized within a closed analytic framework. This note develops a complete Euclidean reconstruction of the diagram using only the two measured phasors and elementary geometric operations, yielding a unique circle, a torque chord, a slip scale, and a maximum--torque point. We prove that this constructed circle coincides precisely with the analytic steady--state current locus obtained from the per--phase equivalent circuit. A M\"obius transformation interpretation reveals the complex--analytic origin of the diagram’s circularity and offers a compact explanation of its geometric structure.
\end{abstract}

\noindent\textit{Keywords:} induction machine, circle diagram, M\"obius transformation, analytic geometry, complex analysis

\section{Introduction}
The Heyland circle diagram is a century--old graphical tool for summarizing the steady--state behavior of induction machines using only the no--load and blocked--rotor tests. Two measured current phasors are used to construct a circle in the complex current plane; auxiliary lines then encode power factor, torque, slip, and efficiency. Classical expositions describe the diagram via geometric steps---drawing phasors, erecting perpendiculars, and constructing diameter lines---but do not formalize the geometry in analytic 
terms \cite{theraja2014textbook, kothari2011electric}.

Modern texts either omit the diagram or focus on equivalent--circuit 
analysis, where the steady--state currents are obtained by algebraic 
manipulation rather than geometric construction \cite{krause2013analysis}. Numerical and algorithmic Cartesian reconstructions have also been described in the literature (e.g., \cite{ghosh2022matlab}), where slopes, intersections, and coordinate calculations are used to generate the diagram step by step from test data. Such approaches recover the circle computationally but do not isolate the underlying Euclidean geometry or establish its analytic properties.

This note presents a formal analytic--geometric reconstruction of the 
classical Heyland circle \cite{heyland1906graphical} using only the two measured phasors. The procedure is stated precisely, and its geometric and analytic properties are established. In particular, we show that
\begin{itemize}
    \item the circle obtained from the perpendicular--bisector geometry is unique under the classical reference--horizontal assumption;
    \item the torque chord and maximum--torque point arise naturally from an orthogonality condition;
    \item the reconstructed circle coincides with the analytic input--current locus derived from the per--phase equivalent circuit.
\end{itemize}

A numerical reconstruction of the diagram using spreadsheets and plotting software was previously presented by the author in \cite{gupta2012testing}, and algorithmic coordinate methods appear in works such as \cite{ghosh2022matlab}. The contribution of the present note is to give a concise analytic--geometric formulation with explicit existence and uniqueness results together with a formal equivalence to the analytic current locus.

To the author's knowledge, a M\"obius transformation interpretation of the Heyland diagram and an explicit equivalence between the geometric construction and the analytic current locus have not appeared in the literature.

\section{Geometric Setup and Classical Reconstruction}
\label{sec:construction}
We interpret phasors as complex numbers embedded in $\mathbb{R}^2$. Let
\[
I = I_m(\cos\phi + j\sin\phi)
\quad\longleftrightarrow\quad
(x,y) = (I_m\cos\phi,\, I_m\sin\phi).
\]

\begin{definition}[Test points]
Let $p_0 = (x_0,y_0)$ denote the endpoint of the no--load current phasor $\,I_0\angle\phi_0\,$ and let $p_A = (x_A,y_A)$ denote the endpoint of the blocked--rotor current phasor $\,I_{sc}\angle\phi_{sc}\,$ referred to rated voltage. Assume $p_0 \neq p_A$.
\end{definition}

\begin{definition}[Output line]
The \emph{output line} is the line $L_{O'A}$ through $p_0$ and $p_A$.
\end{definition}

\begin{construction}[Analytic--Geometric Reconstruction]
\label{constr:recon}
Given test points $p_0, p_A \in \mathbb{R}^2$:

\begin{enumerate}
    \item \textbf{Midpoint.} Define the midpoint of output line $L_{O'A}$
    \[
    C' = \frac{1}{2}(p_0 + p_A).
    \]

    \item \textbf{Circle center.} Let $m_{O'A}$ be the slope of $L_{O'A}$; if 
    $L_{O'A}$ is vertical, interpret $m_{O'A} = \infty$. The line through $C'$ 
    with slope $-1/m_{O'A}$ (interpreted as vertical if $L_{O'A}$ is horizontal) intersects the horizontal line $y = y_0$ at a unique point $C = (x_C,y_0)$. Define
    \[
    r = \|C - p_0\|.
    \]
    The circle diagram is the circle with center $C$ and radius $r$.

    \item \textbf{Torque chord.} Define the point
    \[
    E = \bigg(x_A,\; \frac{1}{2}(y_0 + y_A)\bigg).
    \]
    The segment joining $p_0$ and $E$ is the \emph{torque chord}.

    \item \textbf{Maximum--output point.} The line through $C$ and $C'$ with slope $-1/m_{O'A}$ intersects the circle diagram at the \emph{maximum--output point} $M_O$.

    \item \textbf{Maximum--torque point.} Let $m_{O'E}$ be the slope of the torque chord. The line through $C$ with slope $-1/m_{O'E}$ intersects the circle diagram at the \emph{maximum--torque point} $M_T$. 

    \item \textbf{Slip scale and efficiency line.} A slip scale is obtained by drawing a line parallel to the torque chord and calibrating 0\% and 100\% slip via vertical projections. An efficiency line is obtained by extending $L_{O'A}$ to meet the horizontal through the top of the circle and projecting radial lines from the origin.
\end{enumerate}
\end{construction}

The following results show that this reconstruction is well posed and recovers the classical geometry.

\subsection*{Uniqueness of the Circle}

\begin{proposition}[Uniqueness given a reference horizontal]
\label{prop:uniqueness}
Let $p_0, p_A \in \mathbb{R}^2$ with $p_0 \neq p_A$ and fix the horizontal line $y = y_0$ through $p_0$. Then Construction~\ref{constr:recon} yields a unique circle that passes through both $p_0$ and $p_A$ whose center lies on $y = y_0$.
\end{proposition}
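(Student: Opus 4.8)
The plan is to reduce the statement to the elementary fact that the center of any circle through two given points lies on the perpendicular bisector of those points, and then to show that this perpendicular bisector meets the fixed horizontal line $y=y_0$ in exactly one point, which is precisely the point $C$ produced by Construction~\ref{constr:recon}.

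First I would record the standard characterization: a point $Q$ is the center of a circle passing through both $p_0$ and $p_A$ if and only if $\|Q-p_0\|=\|Q-p_A\|$, i.e.\ $Q$ lies on the perpendicular bisector $\ell$ of the segment $p_0p_A$. Since $p_0\neq p_A$, this $\ell$ is a genuine line; it passes through the midpoint $C'=\tfrac12(p_0+p_A)$ and is orthogonal to the output line $L_{O'A}$, hence has slope $-1/m_{O'A}$ under the conventions of Step~2 ($\ell$ vertical when $L_{O'A}$ is horizontal, $\ell$ horizontal when $L_{O'A}$ is vertical). This is exactly the line used in the construction.

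Next I would impose the constraint that the center lie on $y=y_0$, so the center must be the point $\ell\cap\{y=y_0\}$, and I would check this is a single point by a short case analysis on the orientation of $L_{O'A}$. If $L_{O'A}$ is horizontal then $y_A=y_0$, so $C'$ already lies on $y=y_0$ and $\ell$ is the vertical line through $C'$, meeting $y=y_0$ only at $C'$; thus $C=C'$ and the resulting circle is the one on diameter $p_0p_A$. If $L_{O'A}$ has finite nonzero slope, then $\ell$ has finite nonzero slope as well, hence is neither parallel to nor equal to $y=y_0$, so $\ell\cap\{y=y_0\}$ is the unique point obtained by setting $y=y_0$ in the point–slope equation of $\ell$ and solving for $x$; this is the formula defining $x_C$. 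The remaining orientation, $L_{O'A}$ vertical, is excluded by the construction: there $\ell$ is the horizontal line at height $\tfrac12(y_0+y_A)\neq y_0$, which misses $y=y_0$ entirely, and indeed a one-line distance computation shows that no circle through $p_0$ and $p_A$ then has its center on $y=y_0$. In every admissible case $C$ is uniquely determined.

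Finally I would close the loop: with $C$ so determined and $r:=\|C-p_0\|$, the circle of center $C$ and radius $r$ passes through $p_0$ by definition of $r$, and through $p_A$ because $C\in\ell$ forces $\|C-p_A\|=\|C-p_0\|=r$; uniqueness is then immediate, since any admissible circle must have center in $\ell\cap\{y=y_0\}=\{C\}$ and radius $\|C-p_0\|=r$. I expect the only real care to be in the orientation bookkeeping for the slope $-1/m_{O'A}$ and in observing that the one genuinely degenerate configuration ($L_{O'A}$ vertical) is precisely the one the construction rules out; the geometric content itself is just the perpendicular-bisector argument.
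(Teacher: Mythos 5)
Your proposal is correct and follows essentially the same route as the paper: both reduce the statement to the fact that centers of circles through $p_0$ and $p_A$ form the perpendicular bisector of $p_0p_A$, and then intersect that bisector with the fixed horizontal $y=y_0$ to pin down $C$ uniquely. Your explicit case analysis on the orientation of $L_{O'A}$ is a slight refinement the paper omits: the paper's proof asserts without comment that the bisector meets $y=y_0$ in exactly one point, whereas you correctly flag that the vertical-$L_{O'A}$ configuration makes the bisector parallel to $y=y_0$ (so no admissible circle exists) and that this degenerate case is implicitly excluded by the construction.
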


\begin{proof}
The midpoint $C'$ of the segment $p_0p_A$ is uniquely defined. The line through $C'$ perpendicular to $L_{O'A}$ is uniquely determined and intersects the horizontal line $y = y_0$ in exactly one point $C = (x_C,y_0)$. Because $C'$ is the midpoint of $p_0$ and $p_A$, any point on the perpendicular through $C'$ satisfies $\|C - p_0\| = \|C - p_A\|$. In particular, the constructed $C$ has equal distance to $p_0$ and $p_A$.

Conversely, suppose $\tilde{C}$ is a point on $y = y_0$ such that the circle centered at $\tilde{C}$ passes through both $p_0$ and $p_A$. Then
\[
\|\tilde{C} - p_0\| = \|\tilde{C} - p_A\|.
\]
The locus of such points is the perpendicular bisector of $p_0p_A$, i.e., the line through $C'$ orthogonal to $L_{O'A}$. Its intersection with $y = y_0$ is unique, so $\tilde{C} = C$. Thus, exactly one circle with center on $y = y_0$ passes through both $p_0$ and $p_A$.
\end{proof}

\subsection*{Torque Chord and Maximum Torque}
\begin{lemma}[Orthogonality and maximum torque]
\label{lem:MT}
Let $L_{O'E}$ be the torque chord defined in 
Construction~\ref{constr:recon}, and let $L_\perp$ be the line through $C$ orthogonal to $L_{O'E}$. Then $L_\perp$ intersects the circle diagram in a unique point $M_T$ in the physically admissible region, and this point corresponds to the classical maximum--torque operating condition.
\end{lemma}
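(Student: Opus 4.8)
The plan is to reduce the statement to an elementary extremal property of a circle. In the circle diagram the active electrical quantities are read off along a fixed reference direction (the supply-voltage direction, horizontal in the present normalization), so that the torque produced at an operating point $P$ of the circle is proportional to the signed distance from $P$ to the torque chord $L_{O'E}$, measured along that reference direction; this signed distance vanishes at $p_0$ (no load, $s=0$) and is positive along the motoring arc, where it represents the air-gap power. Maximizing torque over the admissible operating points is thus the same as maximizing this directional distance over the corresponding arc of the circle. Since the horizontal, the vertical, and the perpendicular distance from a point to the \emph{fixed} line $L_{O'E}$ differ only by a positive factor that depends on the inclination of $L_{O'E}$ and not on $P$, it is equivalent---and cleaner---to maximize the ordinary perpendicular distance from $P$ to $L_{O'E}$.

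For the geometric core I would parametrize the circle diagram as $P(\theta)=C+r(\cos\theta,\sin\theta)$ and write $L_{O'E}$ as $\langle n,\,\cdot\,\rangle = c$ with $n$ a unit normal. The perpendicular distance from $P(\theta)$ to $L_{O'E}$ equals $\bigl|\langle n,C\rangle - c + r\langle n,(\cos\theta,\sin\theta)\rangle\bigr|$, whose stationary points in $\theta$ occur exactly when $(\cos\theta,\sin\theta)$ is parallel to $n$, that is, when the radius $CP(\theta)$ is orthogonal to $L_{O'E}$---equivalently, when the tangent to the circle is parallel to $L_{O'E}$, which is the classical ``tangent parallel to the torque line'' characterization of maximum torque. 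The set of such points is precisely $L_\perp \cap S$, where $S$ denotes the circle diagram; since $L_\perp$ passes through the center $C$, it meets $S$ in exactly two antipodal points $M_T$ and $M_T'$, at which the directional-distance functional attains, respectively, its maximum and its minimum. (If $L_{O'E}$ is horizontal or vertical, the reciprocal-slope bookkeeping in Construction~\ref{constr:recon} degenerates as indicated there, with $L_\perp$ taken vertical or horizontal; nothing else changes.)

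It remains to pick out the admissible intersection point and to identify it with the classical operating condition. Because $p_0$ lies on $S$, the torque chord $L_{O'E}$ is a genuine secant, so $C$ is strictly closer to $L_{O'E}$ than $r$; hence, with $n$ oriented toward the positive-torque side, $M_T=C+rn$ and $M_T'=C-rn$ lie on opposite sides of $L_{O'E}$, with $M_T$ on the motoring side (the side of $p_A$) and $M_T'$ on the side where the torque functional is negative. Thus $M_T'$ corresponds to a non-motoring regime and is discarded, and the torque maximizer is the unique point $M_T$ of $L_\perp$ lying in the physically admissible region---exactly the point produced in step~5 of Construction~\ref{constr:recon}. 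Since the value of the torque functional at $M_T$ is the maximum over all operating points, $M_T$ is the breakdown (pull-out) point; invoking the equivalence between the reconstructed circle and the analytic current locus (established below) further identifies the slip read at $M_T$ on the slip scale with the classical breakdown slip.

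The optimization itself is routine; the main obstacle is to make the two ``soft'' ingredients rigorous. The first is the calibration claim---that torque is proportional to a directional distance to $L_{O'E}$---which should be derived from the construction of $E$ and the output line via the loss-partition reading of the vertical intercepts at $p_A$, without circular appeal to the later equivalence theorem. The second is to characterize the admissible arc precisely enough that exactly one of $M_T,M_T'$ lies on it in every parameter regime, including the case in which the breakdown slip exceeds unity, so that $M_T$ sits on the continuation of the motoring arc past $p_A$ rather than strictly between $p_0$ and $p_A$.
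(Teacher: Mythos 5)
Your proposal is correct and follows essentially the same route as the paper: torque is read as a (directional) distance to the torque chord, the extremum over the circle occurs exactly where the radius is orthogonal to the chord, the line $L_\perp$ through $C$ meets the circle in two antipodal points, and the one on the motoring side is selected as $M_T$. The two ``soft'' ingredients you flag (the torque--distance calibration of $L_{O'E}$ and the precise delimitation of the admissible arc) are handled in the paper only by appeal to classical circle--diagram analysis, so your explicit parametrized optimization is, if anything, slightly more detailed than the published argument.
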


\begin{proof}
Because the circle diagram is a compact, strictly convex curve, the line $L_\perp$ passing through the center $C$ intersects it in two antipodal points. Only one of these lies in the region conventionally used to represent motoring operation; denote it by $M_T$.

Classical circle--diagram analysis (see, e.g., 
\cite{langsdorf1937theory, fitzgerald2002electric}) shows that torque is proportional to the air--gap power, which in turn corresponds to a specific geometrically represented component of the current. The torque chord $L_{O'E}$ is drawn so that projections of operating points onto this chord encode that component. An extremum of torque with respect to slip occurs when the derivative of this projection vanishes. Geometrically, this happens when the radius vector at the operating point is orthogonal to the torque chord, i.e., when the operating point lies on $L_\perp$. Thus $M_T$ represents the maximum--torque operating condition.
\end{proof}

\begin{remark}
Lemma~\ref{lem:MT} recovers the classical maximum--torque construction in purely geometric terms, without invoking explicit torque formulas.
\end{remark}

\section{Equivalence to the Analytic Current Locus}
\label{sec:equivalence}
We now connect the circle diagram obtained from the analytic--geometric reconstruction to the steady--state current locus predicted by the per--phase equivalent circuit. The complete algebraic derivation is given in Appendix~\ref{app:analytic}.

\begin{theorem}[Equivalence to the equivalent--circuit locus]
\label{thm:equivalence}
Under the standard assumptions of balanced, sinusoidal steady--state 
operation with slip--independent parameters, the circle diagram obtained from Construction~\ref{constr:recon} coincides with the input--current locus of the induction machine derived from the per--phase equivalent circuit.
\end{theorem}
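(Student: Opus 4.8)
The plan is to produce the equivalent-circuit current locus explicitly, recognize it as a circle through the two test points whose center lies on $y = y_0$, and then invoke the uniqueness statement of Proposition~\ref{prop:uniqueness} to force it to coincide with the circle of Construction~\ref{constr:recon}. Concretely, I would start from the approximate (Heyland) per-phase equivalent circuit, with the applied phase voltage $V_1$ taken as reference along the imaginary axis and the magnetizing/core-loss branch transferred to the terminals, so that the input current splits as $I_1(s) = I_0 + I_2'(s)$ with $I_0$ slip-independent and
\[
I_2'(s) = \frac{jV_1}{(R_1 + R_2'/s) + j(X_1 + X_2')}.
\]
As $s$ ranges over the physical slip values (and, taking closure, over $\mathbb{R}\cup\{\infty\}$), the quantity $R_2'/s$ sweeps out $\mathbb{R}$, so the denominator $Z(s) = (R_1 + R_2'/s) + j(X_1 + X_2')$ traverses the horizontal line $\{\mathrm{Im}\, w = X_1 + X_2'\}$ in the impedance plane. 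Hence $s \mapsto I_2'(s)$ is the restriction to a straight line of the M\"obius map $w \mapsto jV_1/w$, i.e.\ an inversion composed with a rotation--scaling.

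Next I would use the standard fact that a M\"obius map sends this line (which misses the origin) to a circle through the origin, and compute that circle's data: a direct calculation (the one carried out in Appendix~\ref{app:analytic}) gives center $\left(\tfrac{V_1}{2(X_1 + X_2')},\,0\right)$ and radius $\tfrac{V_1}{2(X_1 + X_2')}$ for the $I_2'$-locus. Translating by $I_0 = p_0$, the $I_1$-locus is therefore a circle whose center is $p_0 + \left(\tfrac{V_1}{2(X_1+X_2')},\,0\right)$ --- in particular a point on the horizontal line $y = y_0$ --- and which passes through $p_0$ (attained as $s \to 0$, where $I_2' \to 0$). Under the theorem's modeling assumptions the blocked-rotor measurement is exactly this locus evaluated at $s = 1$, so $p_A = I_1(1)$ lies on the same circle. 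At this point the analytic locus is a circle through both $p_0$ and $p_A$ with center on $y = y_0$, so Proposition~\ref{prop:uniqueness} identifies it with the circle built by Construction~\ref{constr:recon}; the matching of the torque chord, the maximum-torque point, and the slip/efficiency scales to their equivalent-circuit meanings then follows from Lemma~\ref{lem:MT} and the classical air-gap-power correspondence.

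The computation itself is short, so the real work is in the bookkeeping, and that is where I expect the obstacles to sit. The first delicate point is that ``center on $y = y_0$'' is not convention-independent: it depends on orienting the reference phasor along the imaginary axis (equivalently, placing the real-power axis vertically), which is precisely the ``classical reference-horizontal assumption'' of Proposition~\ref{prop:uniqueness} --- with the voltage along the real axis one instead obtains a circle centered on the vertical line $x = x_0$, and the construction would have to be restated accordingly. The second, and I expect the main, obstacle is justifying that the measured no-load and blocked-rotor phasors genuinely are the locus points at $s = 0$ and $s = 1$: this requires that a single set of slip-independent parameters underlies both tests and the running machine, which is where the ``standard assumptions'' do the real work, and it is also why the argument must use the approximate circuit --- in the exact circuit $I_1 = I_0 + I_2'$ fails with $I_0$ held constant, and although the locus is still a circle through $p_0$ and $p_A$, its center is displaced off $y = y_0$, so without the Heyland approximation it no longer matches the classical construction.
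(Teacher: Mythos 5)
Your argument is correct, but it takes a genuinely different route from the paper's. The paper keeps the exact per--phase circuit, reduces it by a Th\'evenin equivalent seen from the rotor branch, writes $x(s)=\Re\{I_r(s)\}$, $y(s)=\Im\{I_r(s)\}$ with $a(s)=R_{\mathrm{th}}+R_r'/s$, eliminates $a(s)$ to obtain $(x-x_c)^2+(y-y_c)^2=r^2$, and then appeals to ``standard algebra'' in Appendix~\ref{app:analytic} for the claim that $(x_c,y_c)$ and $r$ coincide with the center and radius of Construction~\ref{constr:recon}; the closing move --- uniqueness via Proposition~\ref{prop:uniqueness} --- is the same as yours. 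You instead adopt the approximate (Heyland) circuit, split $I_1=I_0+I_2'(s)$ with $I_0$ fixed, and compute the inversion image of the horizontal impedance line explicitly, obtaining center $p_0+\bigl(V_1/(2(X_1+X_2')),\,0\bigr)$ and radius $V_1/(2(X_1+X_2'))$, so the crucial fact that the analytic center lies on $y=y_0$ is a computed statement rather than a cited one. This is the real difference in what the two approaches buy: the paper's exact--circuit route makes circularity unconditional but leaves the center--matching step implicit, and your final observation correctly identifies that this step is exact only under the Heyland approximation (or negligible stator resistance). Indeed, in the exact circuit $I_1=I_0+\kappa I_r$ with $\kappa=Z_m/(Z_s+Z_m)$, so the locus center sits at $I_0-jV\kappa^2/\bigl(2(X_{\mathrm{th}}+X_r')\bigr)$, which is tilted off the horizontal through $p_0$ by roughly $2\arctan\bigl(R_s/(X_s+X_m)\bigr)$; your version therefore makes explicit both the axis convention (voltage along the vertical reference) and the modeling hypothesis that Theorem~\ref{thm:equivalence} and Appendix~\ref{app:analytic} tacitly rely on. As a proof of the theorem in the sense the classical construction intends, your argument is complete and, on the center--location point, more careful than the paper's.
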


\begin{proof}
Classical equivalent--circuit analysis represents the stator and 
magnetizing branches by impedances $Z_s$ and $Z_m$, forms their 
Th\'evenin equivalent as seen from the rotor branch, and expresses the rotor current $I_r(s)$ as a function of slip $s$. In 
Appendix~\ref{app:analytic} we show that the real and imaginary parts of either the rotor current $I_r(s)$ or the input current $I(s)$ satisfy an equation of the form
\[
(x - x_c)^2 + (y - y_c)^2 = r^2,
\]
so that the corresponding current locus is a circle with center $(x_c,y_c)$ and radius $r$ determined by the equivalent--circuit parameters.

The no--load and blocked--rotor operating points correspond to specific values of slip and therefore lie on this analytic circle. Proposition \ref{prop:uniqueness} implies that, once the reference horizontal $y = y_0$ through $p_0$ is fixed, there exists a unique circle centered on this line and passing through both test points. Appendix~\ref{app:analytic} further shows that the center and radius computed from the equivalent circuit coincide with those obtained via the perpendicular--bisector geometry. Hence the circle diagram produced by Construction~\ref{constr:recon} and the analytic equivalent--circuit current locus are identical.
\end{proof}

\begin{remark}
Theorem~\ref{thm:equivalence} shows that the classical circle diagram is precisely consistent with the steady--state equivalent--circuit model under its usual assumptions, rather than being an approximate or heuristic representation.
\end{remark}

\section{M\"obius Transformations and the Current Locus}
\label{sec:mobius}
The rotor current $I_r(s)$ obtained from the per--phase equivalent circuit admits a natural formulation in the language of complex analysis. After introducing the variable $z = 1/s$, the mapping $s \mapsto I_r(s)$ becomes a fractional linear (Möbius) transformation \cite{ferdinand1827barycentrische, needham2023visual}. Since Möbius maps send lines to circles  (see, e.g., \cite{needham2023visual}), this provides a succinct analytic explanation for the circular nature of the Heyland diagram.

\subsection{Rotor Current as a Möbius Map}
Let
\[
    z = \frac{1}{s}, \qquad 
    Z_r(s) = \frac{R_r'}{s} + jX_r' = R_r' z + jX_r',
\]
so that the rotor branch impedance becomes affine in $z$.
Using the Th\'evenin equivalent seen from the rotor branch,
\[
    Z_{\mathrm{th}} = R_{\mathrm{th}} + jX_{\mathrm{th}}, \qquad
    V_{\mathrm{th}} = V_{\mathrm{th},x} + j V_{\mathrm{th},y},
\]
the rotor current becomes
\begin{equation}
\label{eq:mobius-basic}
    I_r(z)
    = \frac{V_{\mathrm{th}}}
           {R_{\mathrm{th}} + R_r' z + j(X_{\mathrm{th}} + X_r')}.
\end{equation}

Define the complex constants
\[
    \alpha = 0,\qquad
    \beta  = V_{\mathrm{th}},\qquad
    \gamma = R_r',\qquad
    \delta = R_{\mathrm{th}} + j(X_{\mathrm{th}} + X_r').
\]
Then \eqref{eq:mobius-basic} takes the standard fractional linear form
\begin{equation}
\label{eq:mobius-form}
    I_r(z)
    = \frac{\alpha z + \beta}{\gamma z + \delta}
\end{equation}

\begin{proposition}
\label{prop:mobius-clean}
The mapping $z \mapsto I_r(z)$ defined by \eqref{eq:mobius-form} is a nondegenerate Möbius transformation on the extended complex plane $\widehat{\mathbb{C}}$.
\end{proposition}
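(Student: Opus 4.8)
The plan is to reduce the statement to the single algebraic condition that characterizes a nondegenerate Möbius map, namely nonvanishing of the coefficient determinant of \eqref{eq:mobius-form}; once that is established, bijectivity and conformality on $\widehat{\mathbb{C}}$, together with the line-to-circle property, are inherited from the standard theory.

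First I would recall the classical dichotomy: for complex constants $\alpha,\beta,\gamma,\delta$, the assignment $z\mapsto(\alpha z+\beta)/(\gamma z+\delta)$ is a nondegenerate Möbius transformation of $\widehat{\mathbb{C}}$ exactly when $\alpha\delta-\beta\gamma\neq 0$, and it collapses to a constant on its domain when $\alpha\delta-\beta\gamma=0$. Substituting the constants defined just before \eqref{eq:mobius-form} gives
\[
\alpha\delta-\beta\gamma
= 0\cdot\bigl(R_{\mathrm{th}}+j(X_{\mathrm{th}}+X_r')\bigr) - V_{\mathrm{th}}\,R_r'
= -\,R_r'\,V_{\mathrm{th}},
\]
so it remains only to check that $R_r'\neq 0$ and $V_{\mathrm{th}}\neq 0$.

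The second step --- the only place where the physics enters --- is to justify these two nonvanishing conditions from the standing assumptions. The referred rotor resistance satisfies $R_r'>0$ for any physical machine, so $R_r'\neq 0$. The Thévenin source voltage seen from the rotor branch is the voltage-divider image $V_{\mathrm{th}} = V\,Z_m/(Z_s+Z_m)$ of the applied per-phase voltage $V$; this is nonzero whenever the machine is excited ($V\neq 0$) and the stator and magnetizing impedances are finite with $Z_s+Z_m\neq 0$, both of which hold under the hypotheses used to form the Thévenin equivalent. Hence $\alpha\delta-\beta\gamma=-R_r'V_{\mathrm{th}}\neq 0$, and \eqref{eq:mobius-form} is a nondegenerate Möbius transformation of $\widehat{\mathbb{C}}$.

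For completeness I would also note that $\gamma=R_r'\neq 0$, so the map is genuinely non-affine: it has a simple pole at $z=-\delta/\gamma$, which it carries to $\infty$, and it sends $\infty$ to $\alpha/\gamma=0$, the latter being precisely the vanishing of the rotor current in the synchronous limit $z=1/s\to\infty$. I expect no genuine mathematical obstacle in this proposition; it is essentially a determinant computation, and the only point worth stating explicitly rather than glossing over is the reduction of nondegeneracy to the physical facts $V_{\mathrm{th}}\neq 0$ and $R_r'\neq 0$, since these --- not any formal identity --- are what exclude the constant case.
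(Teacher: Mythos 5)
Your proposal is correct and follows essentially the same route as the paper: identify the coefficients of \eqref{eq:mobius-form}, compute the determinant $\alpha\delta-\beta\gamma=-V_{\mathrm{th}}R_r'$, and invoke the physical assumptions $R_r'>0$ and $V_{\mathrm{th}}\neq 0$ to conclude nondegeneracy. Your extra remarks on the voltage-divider origin of $V_{\mathrm{th}}\neq 0$ and on the pole and the image of $\infty$ are sound but go beyond what the paper's proof records.
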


\begin{proof}
A Möbius transformation has the form $f(z) = (az+b)/(cz+d)$ with 
$ad-bc \neq 0$. In \eqref{eq:mobius-form}, $a=\alpha=0$, $b=\beta=V_{\mathrm{th}}$, $c=\gamma=R_r'$, and $d=\delta$. The determinant is
\[
    ad - bc = -V_{\mathrm{th}}R_r' \neq 0
\]
under the usual physical assumptions $R_r' > 0$ and $V_{\mathrm{th}} \neq 0$. Thus, \eqref{eq:mobius-form} defines a Möbius transformation.
\end{proof}

\paragraph{Geometric interpretation.}
During steady--state operation the variable $z = 1/s$ lies on the real axis, so $I_r(z)$ maps a line through a Möbius transformation. Since Möbius maps send lines to circles, the image $I_r(\Gamma^\star)$ must be circular. The no--load and blocked--rotor phasors correspond to distinct real values of $z$ and lie on this circle; by Proposition~\ref{prop:uniqueness}, it is the same circle obtained from the geometric reconstruction.

\subsection{Circularity of the Rotor--Current Locus}
We begin with a classical structural property of Möbius transformations.

\begin{theorem}[Lines and circles are preserved]
\label{thm:mobius-lines}
A Möbius transformation maps every line or circle in $\mathbb{C}$ to 
another line or circle in $\mathbb{C}$.
\end{theorem}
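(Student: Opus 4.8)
The plan is to prove this by the classical generator decomposition of Möbius transformations, combined with the unified Hermitian equation of a line or circle. First I would recall that every Möbius map $f(z) = (az+b)/(cz+d)$ with $ad - bc \neq 0$ factors as a finite composition of maps of three elementary types: translations $z \mapsto z + t$, rotation--dilations $z \mapsto \lambda z$ with $\lambda \neq 0$, and the inversion $z \mapsto 1/z$. If $c = 0$ the map is affine and this is immediate; if $c \neq 0$, one writes
\[
f(z) = \frac{a}{c} + \frac{bc-ad}{c}\cdot\frac{1}{cz+d},
\]
exhibiting $f$ as the composition $z \mapsto cz+d \mapsto 1/(cz+d) \mapsto \tfrac{bc-ad}{c}\cdot\tfrac{1}{cz+d} \mapsto \tfrac{a}{c}+\cdots$, i.e.\ a composition of the three generator types. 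Since the property ``maps every line or circle to a line or circle'' is closed under composition, it suffices to verify it for each generator.

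For the affine generators $z \mapsto \lambda z + t$ with $\lambda \neq 0$ this is geometrically transparent: such a map is a Euclidean similarity (a rotation, a scaling, and a translation), so it carries lines to lines and circles to circles. The substantive step is the inversion. Here I would use the fact that every line and every circle in $\mathbb{C}$ is the solution set of an equation
\[
A\,z\bar z + \overline{B}\,z + B\,\bar z + C = 0, \qquad A, C \in \mathbb{R},\ B \in \mathbb{C},\ |B|^2 > AC,
\]
which is a line when $A = 0$ and a circle when $A \neq 0$. Substituting $z = 1/w$ and clearing the denominator $w\bar w$ gives
\[
C\,w\bar w + B\,w + \overline{B}\,\bar w + A = 0,
\]
which has exactly the same form, with the roles of $A$ and $C$ interchanged and $B$ replaced by $\overline{B}$; the non--degeneracy inequality $|B|^2 > AC$ is manifestly preserved. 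Hence inversion sends every line or circle to another line or circle. Combining the affine case with the inversion case and invoking closure under composition yields the theorem.

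I expect the only real obstacle to be the bookkeeping in the inversion step: writing the line/circle locus in the symmetric Hermitian form, tracking how the coefficients $(A,B,C)$ transform under $z \mapsto 1/w$, and confirming that the non--degeneracy condition survives; everything else is routine. I would also remark that an alternative, generator--free argument is available: a Möbius transformation preserves the cross--ratio of four points, and four distinct points lie on a common line or circle precisely when their cross--ratio is real, so collinearity/concyclicity is automatically preserved. I would present the generator argument as the main proof, since it is the most self--contained given the development so far, and note the cross--ratio argument as a remark.
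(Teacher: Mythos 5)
Your proposal is correct, but note that the paper does not actually prove Theorem~\ref{thm:mobius-lines} at all: it is quoted as a ``classical structural property'' with a citation to the literature (Needham), and the surrounding text immediately applies it to the image of the punctured real axis. So your generator argument is not a variant of the paper's proof but a self-contained replacement for the citation, and it is the standard one: decompose $f(z)=(az+b)/(cz+d)$ into translations, rotation--dilations, and the inversion $z\mapsto 1/z$ via $f(z)=\tfrac{a}{c}+\tfrac{bc-ad}{c}\cdot\tfrac{1}{cz+d}$ (the affine case $c=0$ being immediate), observe that the line/circle property is closed under composition, handle the affine generators as Euclidean similarities, and do the inversion through the Hermitian form $A z\bar z+\overline{B}z+B\bar z+C=0$, $|B|^2>AC$, whose coefficients transform by swapping $A\leftrightarrow C$ and conjugating $B$, manifestly preserving nondegeneracy. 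All of that checks out; the algebra in the partial-fraction split and in the coefficient bookkeeping is right. The one refinement worth a sentence is that, as stated over $\mathbb{C}$ rather than $\widehat{\mathbb{C}}$, the claim is true only up to the single exceptional point: the pole $z=-d/c$ (if it lies on the given line or circle) has no image in $\mathbb{C}$, and $f(\infty)=a/c$ must be adjoined to recover the full image circle when the original locus is a line. Since the paper itself works on the extended plane in Proposition~\ref{prop:mobius-clean} and later maps the \emph{line} $\Gamma^\star$ to a circle, phrasing your theorem and proof on $\widehat{\mathbb{C}}$ (lines understood as circles through $\infty$) would make it mesh cleanly with how the result is used; your closing cross-ratio remark is a fine alternative to mention but is not needed.
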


In steady--state operation the slip $s$ is real, and the substitution 
$z = 1/s$ therefore restricts the physically admissible domain of the 
mapping $I_r(z)$ to the punctured real axis,
\[
    \Gamma^\star = \{ z \in \mathbb{R} : z \neq 0 \}.
\]
By Proposition~\ref{prop:mobius-clean}, the function $I_r$ is a 
nondegenerate Möbius transformation; hence, by Theorem~\ref{thm:mobius-lines}, the image $I_r(\Gamma^\star)$ must lie on either a straight line or a circle. Since $I_r$ is bounded on $\Gamma^\star$ and the transformation is not degenerate, the image cannot be a line and must therefore be a circle.

\begin{proposition}[Analytic origin of the Heyland circle]
\label{prop:analytic-circle}
Under steady--state conditions with slip--independent parameters, the 
rotor--current locus $I_r(\Gamma^\star)$ is a circle in the complex 
plane. The no--load and blocked--rotor phasors lie on this circle, whose center and radius agree with those obtained via the geometric reconstruction of Section~\ref{sec:construction}.
\end{proposition}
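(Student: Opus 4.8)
The plan is to assemble the three assertions of Proposition~\ref{prop:analytic-circle} from results already established, reserving the genuine content for the identification of center and radius. First I would confirm that $I_r(\Gamma^\star)$ is a circle. By Proposition~\ref{prop:mobius-clean} the map $z\mapsto I_r(z)$ is a nondegenerate Möbius transformation, and $\Gamma^\star$ is a subset of the real axis, which together with the point at infinity is a circle in $\widehat{\mathbb{C}}$; by Theorem~\ref{thm:mobius-lines} its image lies on a line or a circle. To exclude the line, I would note that the denominator $\gamma z+\delta=R_r' z+R_{\mathrm{th}}+j(X_{\mathrm{th}}+X_r')$ has constant, nonzero imaginary part $X_{\mathrm{th}}+X_r'$ for real $z$, so it never vanishes and $|I_r(z)|\le |V_{\mathrm{th}}|/|X_{\mathrm{th}}+X_r'|$ for all $z\in\mathbb{R}$; a bounded set cannot be a full line, and since restricting the domain to $\Gamma^\star$ deletes only the image points $I_r(0)=V_{\mathrm{th}}/\delta$ and $I_r(\infty)=0$, the closure of $I_r(\Gamma^\star)$ is a genuine circle.

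Next I would locate the two test points. The blocked-rotor condition is $s=1$, i.e.\ $z=1\in\Gamma^\star$, so $I_r(1)$ lies on the circle, while the no-load condition is the limit $s\to 0^+$, i.e.\ $z\to\infty$, with limiting current $I_r(\infty)=0$ on the closure. Here I would make explicit that the input current differs from the rotor current only by the fixed magnetizing phasor, which in the idealized model is the no-load point itself, so $I(z)=p_0+I_r(z)$; since this is a translation and translations carry circles to congruent circles, the input-current locus is again a circle, on which $p_0$ (the $s\to 0$ value) and $p_A=p_0+I_r(1)$ (the $s=1$ value) lie. Spelling this out removes any ambiguity about whether ``the no-load and blocked-rotor phasors'' refer to rotor or input currents.

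For the coincidence of center and radius I would avoid a coordinate-by-coordinate comparison and instead invoke Proposition~\ref{prop:uniqueness}: once the reference horizontal $y=y_0$ through $p_0$ is fixed, there is exactly one circle through $p_0$ and $p_A$ with center on that line, and the constructed circle has this property by design. It therefore suffices to check that the analytic (input-current) circle also has its center on $y=y_0$. Structurally this follows by writing $I_r$ as $V_{\mathrm{th}}$ times the inversion of the horizontal line $\{\,t+j(X_{\mathrm{th}}+X_r'):t\in\mathbb{R}\,\}$ swept out by $\gamma z+\delta$: that inversion is a circle through the origin with a vertical diameter, and multiplication by $V_{\mathrm{th}}$ rotates the diameter to the direction orthogonal to $V_{\mathrm{th}}$; in the classical frame, where the reference voltage lies along the axis perpendicular to $y=y_0$ and (in the lossless idealization) $V_{\mathrm{th}}$ is parallel to it, that diameter is horizontal and the center lands on $y=y_0$. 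Combined with the closed-form center and radius computed in Appendix~\ref{app:analytic}, this identifies the analytic circle with the one produced by Construction~\ref{constr:recon}.

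I expect the main obstacle to be bookkeeping rather than depth: keeping the rotor-current and input-current loci (which differ by the translation $p_0$) cleanly separated, and reconciling the ``horizontal'' line $y=y_0$ of Construction~\ref{constr:recon} with the classical placement of the voltage phasor, so that ``center on $y=y_0$'' is precisely the statement verified in the appendix and not a convention-dependent artifact. Once the coordinate frame is fixed consistently, the uniqueness result does the rest and no estimation is required.
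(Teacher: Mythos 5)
Your proposal follows essentially the same route as the paper's own proof: Möbius nondegeneracy (Proposition~\ref{prop:mobius-clean}) together with Theorem~\ref{thm:mobius-lines} and boundedness give circularity of $I_r(\Gamma^\star)$, the values at $z\to\infty$ and at the finite blocked-rotor $z$ place the two test points on the locus, and Proposition~\ref{prop:uniqueness} then identifies it with the constructed circle. The points where you go beyond the paper are genuine improvements in rigor rather than a different method: you separate the rotor-current and input-current loci explicitly via the translation $I = p_0 + I_r$ (the paper's proof silently identifies $I_r(\infty)=0$ with ``the no-load current lies on the circle''), and you notice that invoking Proposition~\ref{prop:uniqueness} requires first verifying that the analytic circle's center lies on $y=y_0$, which you supply by inverting the horizontal line $\Im w = X_{\mathrm{th}}+X_r'$ and rotating by $\arg V_{\mathrm{th}}$ — correctly flagging that exact horizontality rests on the classical alignment of $V_{\mathrm{th}}$ with the voltage reference (lossless-stator idealization), an assumption the paper buries in the ``standard algebra'' of Appendix~\ref{app:analytic}.
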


\begin{proof}
As $z \to +\infty$ (equivalently, $s \to 0^{+}$), the Möbius 
representation~\eqref{eq:mobius-form} yields
\[
    I_r(+\infty)
    = \lim_{z\to+\infty} \frac{V_{\mathrm{th}}}{\gamma z + \delta}
    = 0,
\]
so the no--load current $I_0$ lies on the image circle. The blocked--rotor test corresponds to a finite real value of $z$ and thus also produces a point of $I_r(\Gamma^\star)$.

Because a nondegenerate Möbius transformation maps the real axis to a circle, and the no--load and blocked--rotor phasors lie on this image, Proposition~\ref{prop:uniqueness} implies that this circle is the unique one passing through $p_0$ and $p_A$ whose center lies on the prescribed horizontal reference. This coincides with the circle obtained from the analytic--geometric construction.
\end{proof}

\begin{remark}[Generating operation: negative slip]
For $s<0$ (generating mode), the point $z = 1/s$ lies on the negative real axis. The domain remains a subset of a line, so its image under the Möbius map lies on the same circle. Negative slip corresponds to points on the opposite side of the diagram, consistent with classical representations of generating operation. Thus, the Möbius interpretation provides a unified analytic description of motoring and generating modes.
\end{remark}

\subsection{Geometric Meaning of the Maximum--Torque Condition}
For the specialized Möbius form
\[
    I_r(z)
    = \frac{V_{\mathrm{th}}}{\gamma z + \delta}
    = \frac{\beta}{\gamma z + \delta},
\]
the derivative with respect to $z$ is
\[
    \frac{dI_r}{dz}
    = -\frac{\beta \gamma}{(\gamma z + \delta)^{2}}.
\]
Since $s = 1/z$, the chain rule gives
\[
    \frac{dI_r}{ds}
    = \frac{dI_r}{dz}\,\frac{dz}{ds}
    = -\frac{\beta \gamma}{(\gamma z + \delta)^2}\!
      \left(-\frac{1}{s^{2}}\right)
    = \frac{\beta \gamma}{(\gamma z + \delta)^{2} s^{2}}.
\]

In classical complex–power theory, torque is proportional to  
$\Re\{V_{\mathrm{th}}^{*} I_r\}$. Multiplying the current by the fixed complex constant $V_{\mathrm{th}}^{*}$ simply rotates and scales the complex plane. After absorbing this rotation into the reference frame, the torque can be written (up to a constant factor) as the imaginary part of the rotated current phasor. Thus, without loss of generality,
\[
    T(s) \ \propto\ \Im\!\{\, I_r(s)\, V_{\mathrm{th}}^{*} \}.
\]
This formulation is equivalent to the standard real–power expression but is better suited to M\"obius analysis from an algebraic standpoint. Hence the extremum condition $dT/ds = 0$ becomes
\[
    \Im\!\left\{
        \frac{dI_r}{ds}\, V_{\mathrm{th}}^{*}
    \right\} = 0.
\]
Substituting the expression for $dI_r/ds$ yields
\[
    \Im\!\left\{
        \frac{\beta \gamma\, V_{\mathrm{th}}^{*}}
             {(\gamma z + \delta)^{2}\, s^{2}}
    \right\} = 0.
\]
The factor $\beta \gamma V_{\mathrm{th}}^{*}$ is a nonzero complex constant and therefore irrelevant to the phase constraint. Thus, the extremum condition reduces to
\[
    \arg\!\big( (\gamma z + \delta)^{-2} \big)
    \in \{0,\pi\}.
\]

Geometrically, the tangent to the current locus is proportional to 
$dI_r/ds$. Requiring the above imaginary part to vanish is equivalent to demanding that this tangent be parallel to the classical torque line. Since the tangent to a circle is everywhere perpendicular to the radius, this is equivalent to requiring that the radius vector at the operating point be orthogonal to the torque chord:
\[
    \arg(I_r - C)
    \ =\
    \arg(\text{normal to torque chord}).
\]
This is precisely the geometric condition obtained in Lemma~\ref{lem:MT}, establishing the equivalence of the analytic M\"obius--phase criterion and the geometric orthogonality condition.

The analytic and complex--analytic characterizations developed above both predict a circular current locus passing through the no--load and blocked--rotor points. The following example illustrates these results using measured test data and displays the agreement between the analytic locus, the Möbius image of the slip axis, and the geometric reconstruction of Section~\ref{sec:construction}.

\section{Example: M\"obius and Analytic Reconstruction of the Circle Diagram}
\label{sec:example}

\begin{figure}[ht!]
    \centering
    \includegraphics[width=0.95\linewidth]{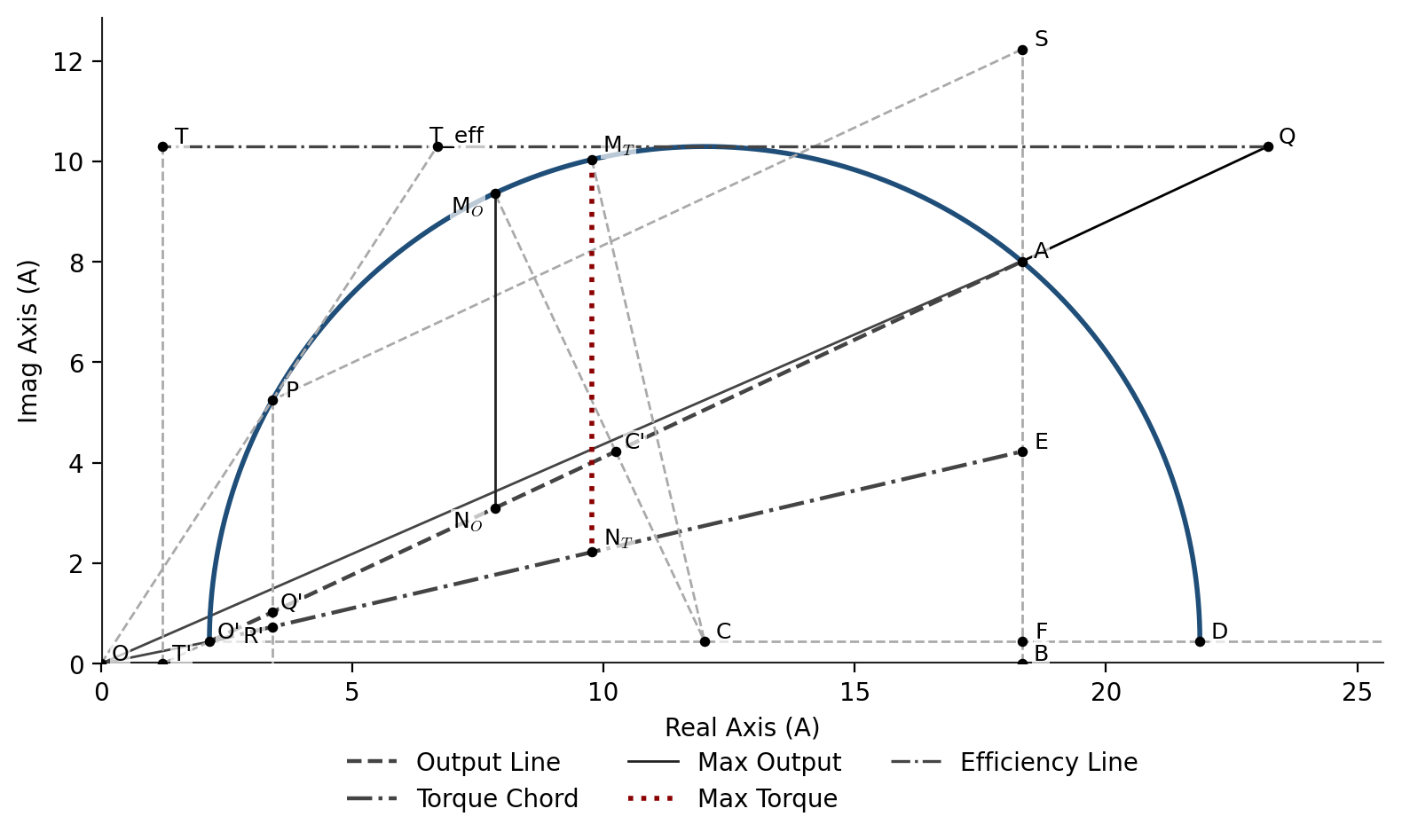}
    \caption{Image of the slip axis under the Möbius transformation
    \(I_r(z)=V_{\mathrm{th}}/(\gamma z+\delta)\). Positive real values of
    \(z\) (\(z>0\)) correspond to motoring operation (\(s>0\)), while
    negative values (\(z<0\)) correspond to generating operation (\(s<0\)).
    Both branches map to the same circular current locus (blue arc). The
    no–load point \(O'\) and blocked–rotor point \(A\) lie on this locus
    and uniquely determine its center \(C\) once the reference horizontal
    through \(O'\) is fixed. Also shown are the classical geometric
    constructions: the output line (dashed), torque chord (dash–dot),
    maximum–output line (solid), maximum–torque line (red dotted), and the
    efficiency line (grey).}

    \label{fig:diagram}
\end{figure}

To illustrate the results of Sections~\ref{sec:construction}--\ref{sec:mobius}, we apply both the analytic locus derived from the equivalent circuit and the Möbius representation of \eqref{eq:mobius-form} to a representative test case. The mapping $z = 1/s$ sends steady--state slip values to the real axis, and the resulting image under the Möbius transformation $I_r(z)$ produces a circular current locus, as established in Proposition~\ref{prop:analytic-circle}. Independently, the analytic expressions for the real and imaginary components of the current yield the same circle, consistent with Theorem~\ref{thm:equivalence}.

The example in Fig.\ref{fig:diagram} illustrates this agreement visually. The no--load and blocked--rotor phasors, computed from the test data of \cite{theraja2014textbook}, lie on the Möbius image of the slip axis and determine the same center and radius as those obtained from the analytic--geometric construction of Section~\ref{sec:construction}.  
The resulting diagram recovers the classical geometric features—torque chord, slip calibration, and maximum--torque point—while simultaneously displaying their complex--analytic origin.

The example confirms that all three perspectives—geometric, analytic, and M\"obius—produce the same operating diagram and recover the classical features of the Heyland construction.  We now summarize the main contributions and outline directions for generalization.

\section{Conclusion}
\label{sec:conclusion}
This work presented a concise analytic--geometric reconstruction of the classical Heyland circle diagram using only the no--load and blocked--rotor current phasors. The procedure produces a unique circle, torque chord, slip calibration, and maximum--torque point through elementary Euclidean constructions. We established existence and uniqueness of the reconstructed circle and showed that the corresponding operating features---including the maximum--torque condition---arise naturally from simple orthogonality relations.

The constructed diagram was further shown to coincide precisely with the steady--state current locus obtained from the per--phase equivalent circuit, thereby formalizing the long-assumed consistency between the geometric diagram and the analytic circuit model. A M\"obius transformation formulation of the rotor current provided a compact complex--analytic explanation for the circularity of the locus and unified the motoring and generating branches through the image of the real axis.

The framework developed here provides a foundation for computational generation and analysis of circle diagrams and clarifies the geometric structure underlying classical constructions. A companion work extends these methods to permanent-magnet synchronous
machines and spacecraft reaction wheels, where voltage constraints yield elliptical loci in the $dq$ plane and require a generalization of the analytic--geometric approach developed in this note.

\appendix
\section{Analytic Derivation of the Current Locus}
\label{app:analytic}
This appendix outlines the derivation of the circular rotor--current locus from the per--phase equivalent circuit and indicates how its center and radius coincide with those obtained from the geometric reconstruction in the main text.

Let the stator and magnetizing impedances be
\[
    Z_s = R_s + jX_s, 
    \qquad 
    Z_m = jX_m,
\]
and let the Th\'evenin equivalent seen from the rotor branch be
\[
    Z_{\mathrm{th}} 
    = \frac{Z_s Z_m}{Z_s + Z_m}
    = R_{\mathrm{th}} + jX_{\mathrm{th}}, 
    \qquad 
    V_{\mathrm{th}} 
    = V\,\frac{Z_m}{Z_s + Z_m}.
\]
The rotor branch referred to the stator is
\[
    Z_r(s) = \frac{R_r'}{s} + jX_r',
\]
so the rotor current is
\[
    I_r(s)
    = \frac{V_{\mathrm{th}}}
           {Z_{\mathrm{th}} + Z_r(s)}
    = \frac{V_{\mathrm{th}}}
           {R_{\mathrm{th}} + \frac{R_r'}{s}
            + j(X_{\mathrm{th}} + X_r')}.
\]

Define
\[
    a(s) = R_{\mathrm{th}} + \frac{R_r'}{s},
    \qquad 
    b = X_{\mathrm{th}} + X_r',
\]
so that
\[
    I_r(s)
    = \frac{V_{\mathrm{th}}}{a(s) + jb}
    = V_{\mathrm{th}}
      \,\frac{a(s) - jb}{a(s)^2 + b^2}.
\]
Let 
\[
    V_{\mathrm{th}} 
    = V_{\mathrm{th},x} + jV_{\mathrm{th},y}.
\]
Expanding the product gives
\[
    I_r(s)
    = \frac{1}{a(s)^2 + b^2}
      \bigl[
            (V_{\mathrm{th},x}a(s) + V_{\mathrm{th},y}b)
            + j\,(V_{\mathrm{th},y}a(s) - V_{\mathrm{th},x}b)
      \bigr],
\]
so that
\[
    x(s)
    = \Re\{ I_r(s) \}
    = \frac{V_{\mathrm{th},x} a(s) + V_{\mathrm{th},y} b}
           {a(s)^2 + b^2},
    \qquad
    y(s)
    = \Im\{ I_r(s) \}
    = \frac{V_{\mathrm{th},y} a(s) - V_{\mathrm{th},x} b}
           {a(s)^2 + b^2}.
\]

The dependence on $s$ enters only through $a(s)$, which appears linearly in the numerators and quadratically in the denominator. Solving the above expressions for $a(s)$ in terms of $(x,y)$ and eliminating $a(s)$ yields an implicit equation of the form
\[
    (x - x_c)^2 + (y - y_c)^2 = r^2,
\]
where $(x_c,y_c)$ and $r$ depend smoothly on 
$R_{\mathrm{th}}, X_{\mathrm{th}}, R_r', X_r'$, and $V_{\mathrm{th}}$.

Standard algebra (see, e.g., \cite{fitzgerald2002electric}) shows that the resulting center $(x_c,y_c)$ has the same coordinates as the point obtained by intersecting the perpendicular bisector of the segment joining the no--load and blocked--rotor points with the horizontal line through the no--load point. Likewise, the analytic radius coincides with the geometric distance from this center to either test point.

Thus the steady--state rotor--current locus derived from the equivalent circuit is a circle whose center and radius agree exactly with those computed by the geometric reconstruction.
\hfill$\square$

\bibliographystyle{ieeetr}
\bibliography{references}

\end{document}